\newcommand{\N}{\mathbb{N}}
\renewcommand{\P}{\mathbb{P}}
\newcommand{\R}{\mathbb{R}}
\newcommand{\U}{\mathcal{U}}
\newcommand{\ceq}{\coloneqq} % :=
\newcommand{\e}{\varepsilon}
\newcommand{\de}{\delta}
\newcommand{\es}{\varnothing}
\newcommand{\bcurly}[1]{\left\{#1\right\}}
\newcommand{\bs}{\baselineskip}
\newcommand{\Bin}{\mathrm{Bin}}
\renewcommand{\mathbb}{\mathbf}
\newcommand{\hgt}{\mathrm{ht}}
\renewcommand{\emptyset}{\es}
\newcommand{\cT}{\mathcal{T}}
\newcommand{\eps}{\varepsilon}
\title{Leaf Stripping on Uniform Attachment Trees}
\author{Louigi Addario-Berry$^*$}
\address{$^*$Department of Mathematics and Statistics, McGill University
\texttt{louigi.addario@mcgill.ca}}
\author{Anna Brandenberger$^\circ$}
\address{$^\circ$Department of Mathematics, MIT \texttt{abrande@mit.edu}}
\author{Simon Briend$^\dagger$} 
\address{$^\dagger$Department of Mathematics, Unidistance Suisse \texttt{simon.briend@unidistance.ch}
}
\author{Nicolas Broutin$^\ddagger$} 
\address{$^\ddagger$Laboratoire de Probabilit\'es, Statistique et Mod\'elisation Sorbonne Universit\'e \texttt{nicolas.broutin@lpsm.paris}
}
\author{G\'abor Lugosi$^\mathsection$} 
\address{$^\mathsection$Department of Economics and Business,
Pompeu  Fabra University, Barcelona, Spain; 
ICREA, Pg. Lluís Companys 23, 08010 Barcelona, Spain; Barcelona School of Economics
 \texttt{gabor.lugosi@gmail.com}}
\begin{document}

\begin{abstract}
In this note we analyze the performance of a simple root-finding algorithm in uniform attachment trees. The \emph{leaf-stripping} algorithm recursively removes all leaves of the tree for a carefully chosen number of rounds. We show that, with probability $1-\e$, the set of remaining vertices 
contains the root and has 
 a size only depending on $\e$ but not on the size of the tree. 
\end{abstract}

\maketitle 

\section{Introduction} \label{sec:intro}

The problem of localizing the root vertex in various random tree models goes back to Haigh \cite{haigh1970recovery} and recently has been intensively studied; see \cite{SZ11,bubeck2017finding,lugosi2019finding,khim2016confidence,reddad2019discovery,jog2016analysis,brandenberger2022root,crane2021inference,crane2023root}.

In this paper we are interested in root-finding algorithms for random recursive trees (also known as uniform attachment trees). A random recursive tree $T_n$ of size $n$ is defined as follows. 
$T_1$ consists of a single vertex with label 1. Then, we inductively obtain $T_{i+1}$ from $T_i$ by connecting a new vertex labelled $i+1$ to an independently and uniformly chosen vertex $v \in [n]$. An equivalent way of viewing this tree is as uniformly chosen \textit{increasing tree} of size $n$: these are rooted trees with vertex set $[n]$ whose labels increase along any path from the root to a leaf (see, e.g., \cite{BeFlSa1992} or \cite[Example II.18]{FlSe2009}).

Write $\cT_n$ for the set of all unrooted trees with vertex set $[n]$. A {\em root-finding algorithm} for trees of size $n$ is a function $A:\cT_n \to 2^{[n]}$ which is invariant to relabeling: if $T,T' \in \cT_n$ and $T'$ can be obtained from $T$ by a permutation of vertex labels, then $A(T)=A(T')$. Its {\em error probability} is $\P\{1 \not \in A(T_n)\}$, where we abuse notation and write $A(T_n)$ to mean $A(T)$ where $T$ is the unrooted tree corresponding to $T_n$.
The {\em size} of $A$ is $\max(|A(T)|:T \in \cT_n)$. 
The set 
$A(T)$ is often called a \emph{confidence set} for the root vertex.

The paper \cite{bubeck2017finding} showed that there exist root finding algorithms with low error probability whose size is independent of $n$; more precisely, there exists $c > 0$ such that for all $\eps \in (0,1/e)$, for all $n \ge 1$, there exists a root-finding algorithm $A$ for trees of size $n$ with error at most $\eps$ and size at most $\exp(c\log(1/\eps)/\log\log(1/\eps))$. This result is proved via a rather delicate analysis of the {\em maximum likelihood} root-finding algorithm, which first lists nodes in decreasing order of their likelihood of being the root (given the shape of the tree but not its label information), then returns the $k$ most likely nodes, for a suitable value of $k$.
In \cite{bubeck2017finding} a simpler method was also analyzed that ranks vertices according to their \emph{Jordan centrality}, defined by the size of the largest component of the forest obtained by removing the vertex from the tree. 
This method is guaranteed to output a confidence set whose size is at most $(11/\e)\log(1/\e)$ and contains the root vertex with probability $1-\e$.

The purpose of this paper is to show that a different algorithm which we term {\em leaf-stripping}, which is very simple to describe and easy to implement, also yields root finding algorithms with small error probability and with size independent of $n$. The algorithm proceeds as follows. For $n \in \N$ define
\begin{equation}\label{eq:m_n-definition}
    m_n= \Big\lceil e\log n - \frac{3}{2}\log\log (n+1) \Big\rceil\, ;
\end{equation}
then for $k \in \N$, define the leaf-stripping algorithm $R_k(T_n)$ as follows. 
\begin{alg}
Let $T^{(0)}=T_n$. 
For $1 \le i \le m_n-k$, let $T^{(i)}$ be the tree obtained from $T^{(i-1)}$ by removing all its leaves. Then let $R_k(T_n)$ be the vertex set of $T^{(m_n-k)}$.
\end{alg}
We have defined the algorithm only for $T_n$ since that is the only tree we apply it to. Note that the algorithm succeeds if and only if there are at least two edge-disjoint paths of length at least $m_n-k$ starting from vertex $1$ in $T_n$. Equivalently, $1 \in R_k(T_n)$ if and only if $1$ has at least two children $u,v$ in $T_n$ such that the subtrees rooted at $u$ and $v$ have height at least $m_n-k-1$.
This observation allows us to explain the role of the constant $m_n$; it is known from \cite{addario2013poisson} that the expected height of $T_n$ of size $n$ is $m_n+O(1)$, and that the height is exponentially concentrated around $m_n$. Therefore, stripping leaves $m_n$ times will erase most or all of the tree. We instead strip $m_n - k$ times for a suitably chosen $k=k_\e$, to ensure that on one hand, the resulting set is not too large, and on the other hand, the root is not likely to be removed. Our main results state that leaf-stripping yields a family of root-finding algorithms for which the size and the error probability are polynomially related. 

\begin{thm}\label{thm:main}
    There exist $c, \gamma > 0$ and $c' > 0$ such that for all $\e \in (0,1)$, setting $k = \lceil c \log(2/\e)\rceil$, for all $n$ sufficiently large, $R_{k}(T_n)$ satisfies
    \begin{equation*}
        \P \bcurly { 1 \in R_k(T_n) \text{ and } |R_k(T_n)| \leq \e^{-\gamma} } \geq 1 - \e \,.
    \end{equation*}
\end{thm}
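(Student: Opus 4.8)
The plan is to establish the two requirements—that vertex $1$ survives and that the output is small—more or less separately, showing that each fails with probability at most $\e/2$. Root $T_n$ at vertex $1$ and, for $v\in[n]$, write $T_v$ for the subtree of descendants of $v$, so that $|T_v|$ is its size and, conditionally on $|T_v|=s$, the tree $T_v$ is distributed as a uniform attachment tree on $s$ vertices. For $v\in[n]$ let $h_2(v)$ be the second largest value, over the components $C$ of $T_n-v$, of $1+\hgt(C)$. As already noted in the introduction, $v\in R_k(T_n)$ iff $h_2(v)\ge m_n-k$; and since at most one component of $T_n-v$ lies above $v$, at least one component below $v$ must have height $\ge m_n-k-1$, so $\hgt(T_v)\ge m_n-k$ for every $v\in R_k(T_n)$. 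Thus $R_k(T_n)\subseteq\{v:\hgt(T_v)\ge m_n-k\}$, and moreover $1\in R_k(T_n)$ holds as soon as at least two children of $1$ have subtrees of height at least $m_n-k-1$.

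For the size bound, set $\delta=e^{-k}$ and let $G$ be the event that every subtree on fewer than $\delta n$ vertices has height strictly less than $m_n-k$. On $G$ the displayed inclusion upgrades to $R_k(T_n)\subseteq\{v:|T_v|\ge\delta n\}$, and the crucial point is that this set has $n$-independent expected size: using the standard identity $\E[\#\{v\in[n]:|T_v|=s\}]=n/(s(s+1))$ (valid for $1\le s\le n-1$, and easily checked by induction), a telescoping sum gives $\E[\#\{v:|T_v|\ge\delta n\}]\le 2/\delta=2e^k$, so Markov's inequality bounds this count by $8e^k/\e$ outside an event of probability $\e/4$. To control $\P(G^c)$ I would union bound over the at most $n$ subtrees, again using the identity above to count subtrees of each size $s$ and the exponential upper tail $\P\{\hgt(T_s)\ge m_s+j\}\le Ce^{-cj}$ from \cite{addario2013poisson}: for $s<\delta n$ one has $m_n-k-m_s\gtrsim(e-1)k$ when $s\ge\sqrt n$ (the $\log\log$ discrepancies being $O(1)$ there), while the terms with $s<\sqrt n$ contribute only $o_n(1)$ since $m_n-k-m_s\gtrsim\log n$ in that range; hence $\P(G^c)\lesssim\sum_{s<\delta n}\tfrac{n}{s^2}e^{-c(m_n-k-m_s)}\le C_2e^{-c_3k}$ for absolute constants $C_2,c_3>0$. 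Substituting $k=\lceil c\log(2/\e)\rceil$ into $8e^k/\e$ turns this into a fixed power of $1/\e$, which determines $\gamma$.

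For root survival I would analyse the sizes of the subtrees hanging off vertex $1$. Conditionally on these sizes the subtrees are independent uniform attachment trees, and—by the Feller coupling, or the classical identification of the root's subtree sizes with the cycle lengths of a uniform random permutation of $\{2,\dots,n\}$—the second largest of the sizes is $\Theta(n)$ with high probability: precisely, it is at least $\delta' n$ with $\delta':=c_5\,\e/\log(2/\e)$ outside probability $\e/4$, since for a uniform permutation of $N$ elements the probability of having at most one cycle of length exceeding $xN$ is $O(x\log(1/x))$ (uniformly in $N$, for $n$ large). On this event the two largest subtrees of the root each have some size $s\ge\delta' n$, hence each satisfies $m_s\ge m_n-O(\log(1/\e))$; applying the exponential lower tail $\P\{\hgt(T_s)\le m_s-j\}\le Ce^{-cj}$ of \cite{addario2013poisson} with $j\asymp k$ (which exceeds the $O(\log(1/\e))$ deficit once $c$ is large, since $k\ge c\log(2/\e)$) shows that each of these two subtrees has height at least $m_n-k-1$ outside probability $\e/8$. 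By the criterion from the first paragraph, $1\in R_k(T_n)$ on the intersection of these events, which has probability at least $1-\e/2$.

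The hard part is the size bound, and specifically making it independent of $n$. The naive estimate $|R_k(T_n)|\le\#\{v:\hgt(T_v)\ge m_n-k\}$ is useless on its own because this count has expectation of order $\log n$—its mean is inflated by the profile near the front of the $\Theta(n)$-sized subtrees, exactly the effect responsible for the $\tfrac32\log\log$ term in the definition of $m_n$. The device that saves it is the event $G$: a subtree almost as tall as the whole tree must, barring a large deviation of its own height, be almost as large as the whole tree, and only $O(e^k)$ vertices have such large subtrees. Everything else—the exact fringe-subtree count, the permutation-cycle description of the root's subtrees, the conditional independence of subtrees given their sizes, and the verification that all the error terms can be pushed below the prescribed multiples of $\e$ uniformly in $\e\in(0,1)$ by choosing the absolute constant $c$ large enough (and then $\gamma$ large)—is routine bookkeeping.
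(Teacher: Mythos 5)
The structure of your argument is genuinely different from the paper's: for the size bound the paper never performs a union bound over fringe subtrees, but instead embeds $T_n$ in the Ulam--Harris tree and uses a zone-flipping involution to show that the existence of a zone-$4k$ vertex with a tall subtree forces $\hgt(T_n)\ge m_n+k$ with conditional probability at least $1/2$, so only the single global tail bound \eqref{eq:height-tails} is ever needed. Your root-detection argument (second-largest root subtree is $\Theta(n)$ via the cycle structure, then the lower height tail) is fine and parallel to the paper's Lemma~\ref{lem:detection}.

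The gap is in your bound on $\P(G^c)$. The only height tail available (from \eqref{eq:height-tails}, i.e.\ from \cite{addario2013poisson} as quoted) is $\P\{\hgt(T_s)\ge m_s+j\}\le\alpha e^{-\alpha' j}$ with $\alpha'<1/(2e)$, and with this exponent your displayed estimate
$\sum_{s<\delta n}\frac{n}{s^2}e^{-\alpha'(m_n-k-m_s)}\le C_2e^{-c_3 k}$
is false: since $m_n-m_s\approx e\log(n/s)$, the summand is of order $n^{1-\alpha'e}s^{\alpha'e-2}e^{\alpha'k}$, and because $\alpha'e<1/2$ the sum is dominated by small $s$ and grows like $n^{1-\alpha'e}\ge n^{1/2}$. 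Your separate claim that the range $s<\sqrt n$ contributes $o_n(1)$ fails for the same reason: there are $\Theta(n)$ such fringe subtrees and each is only bounded by $e^{\alpha'k}n^{-\alpha'e/2}$ with $\alpha'e/2<1/4$. Even the top of the range is problematic: the dyadic block near $s=\delta n=e^{-k}n$ contributes about $\delta^{\alpha'e-1}e^{\alpha'k}=e^{k(1-\alpha'e+\alpha')}\gg1$, and no choice of the cutoff $\delta$ repairs this unless the tail exponent exceeds $1/e$. So the first-moment union bound over fringe subtrees requires strictly more than the quoted input: one would need an upper tail with exponent close to $1$ for the mid-range $s\in[\sqrt n,\delta n]$, \emph{and} a genuine large-deviation (profile/first-moment) bound of the form $\P\{\hgt(T_s)\ge h\}\le C(\log s)^h/h!$ for the very small subtrees, where the exponential bound is far too weak. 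With those stronger inputs your scheme can be made to work, but as written the size bound does not follow; this is exactly the difficulty the paper's involution is designed to sidestep.
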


Theorem \ref{thm:main} shows that the performance of the leaf-stripping algorithm does not deteriorate as the size $n$ of the tree grows and the size of the confidence set $R_k(T_n)$ may be bounded by a function of the probability of error only. Note however, that the dependence on the probability of error is inferior to that of the maximum likelihood mentioned above. 
In particular, while ranking vertices by their likelihood of being the root produces a confidence set that is smaller than any power of $1/\e$, 
Theorem \ref{thm:main} only implies a bound that is polynomial in $1/\e$.
Theorem~\ref{thm:main2} below implies that this polynomial dependence is necessary (though we do not claim optimality of the obtained exponents). 
\begin{thm}\label{thm:main2}
    There exists $\gamma' >0$ such that for all $\e > 0$ sufficiently small, for all $n$ sufficiently large, for all $k \in \N$ for which $\P\{ 1 \in R_k(T_n)\} \ge 1-\e$, we also have $\P\{ |R_k(T_n)| \geq \e^{-\gamma'}\} \ge 1-\e$.
\end{thm}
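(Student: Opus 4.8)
The plan is to show that if $R_k(T_n)$ contains the root with probability at least $1-\e$, then $m_n-k$ cannot be too large — concretely, $m_n - k \le (1-\beta)\cdot(\text{typical height})$ for some constant $\beta = \beta(\e) > 0$ — and then argue that stripping leaves only $m_n-k$ times leaves behind a set that is polynomially large in $1/\e$. The first half uses the ``succeeds iff there are two edge-disjoint paths of length $\ge m_n-k$ from vertex $1$'' reformulation recorded in the introduction: if $m_n - k$ were so large that even the \emph{height} of $T_n$ (let alone the existence of two long disjoint paths from the root) is at most $m_n-k$ with probability bounded away from zero, then $\P\{1 \in R_k(T_n)\}$ could not be $\ge 1-\e$. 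By the exponential concentration of the height of $T_n$ around $m_n$ (from \cite{addario2013poisson}), $\P\{\hgt(T_n) \le m_n - t\}$ is at least a positive constant for $t$ equal to a suitable constant multiple of $\log(1/\e)$; pushing slightly further, the probability that vertex $1$ fails to have two children with subtrees of height $\ge m_n-k-1$ is at least $\e$ unless $m_n - k \le m_n - c_1\log(1/\e)$, i.e.\ unless $k \ge c_1 \log(1/\e)$ for an absolute constant $c_1$. So we may assume $k \ge c_1\log(1/\e)$.

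The second half is a lower bound on $|R_k(T_n)| = |V(T^{(m_n-k)})|$ that holds with probability $\ge 1-\e$. The vertex set of $T^{(j)}$ is exactly the set of vertices $v$ such that the subtree of $T_n$ ``hanging below'' $v$ (in the rooted version, with $1$ as root) has height at least $j$; more usefully for a lower bound, it suffices to exhibit many vertices $v$ at depth roughly $m_n - 2j$ from the root each of which roots a subtree of height $\ge j$. Here $j = m_n - k$, so we want $\gtrsim \e^{-\gamma'}$ vertices at depth about $2k - m_n$... which is negative; instead the right picture is: near the root there is a ``bushy'' region. The cleaner route: the number of vertices surviving $j$ rounds of leaf-stripping is at least the number of vertices $v$ with $\hgt(T_v) \ge j$, where $T_v$ is the pendant subtree at $v$. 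Condition on the first $m$ vertices $\{1,\dots,m\}$ for $m$ a large constant; with constant probability the tree $T_n$ restricted to $\{1,\dots,m\}$ is a fixed shape in which vertex $1$ has many descendants $w_1,\dots,w_r$ (with $r$ growing with $m$) that are pairwise incomparable. Each $w_i$ roots an independent-ish uniform attachment tree on a $\mathrm{Bin}(n-m,\cdot)$-sized vertex set; the heights of these subtrees are, up to lower-order terms, each $m_{n} + O(1)$ with exponentially small lower tails. In particular, for each fixed constant $a$, $\P\{\hgt(T_{w_i}) \ge m_n - a\} \ge 1 - e^{-c_2 a}$. Taking $a$ a constant multiple of $\log(1/\e)$ and $r$ of order $\e^{-O(1)}$, a union bound gives that with probability $\ge 1 - \e$ all $r$ of these subtrees have height $\ge m_n - a \ge m_n - k = j$ (using $k \ge c_1\log(1/\e)$ and matching constants), hence all of $w_1,\dots,w_r$ survive, so $|R_k(T_n)| \ge r \ge \e^{-\gamma'}$.

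The main obstacle is making the lower bound on the ``bushiness'' near the root quantitatively compatible with the upper bound $k \ge c_1\log(1/\e)$: we need the \emph{number} $r$ of incomparable near-root vertices, each rooting a tall-enough subtree, to reach $\e^{-\gamma'}$ while the height deficit we can afford is only $k \le $ (something like) $c_1 \log(1/\e) + O(1)$. Getting $r$ that large requires descending to depth $\Theta(\log(1/\e))$ in the tree, which costs $\Theta(\log(1/\e))$ in the height of the subtrees that hang off those depth-$\Theta(\log(1/\e))$ vertices — so one must check that ``height of $T_n$ minus depth-$d$'' still exceeds $m_n - k$ for $d = \Theta(\log(1/\e))$, i.e.\ that a uniform attachment tree grown from a depth-$d$ vertex still reaches height $\approx m_n - d + O(1)$ rather than $m_n - d - \Theta(d)$. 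This is true (the pendant subtree at a depth-$d$ vertex is again a uniform attachment tree, and its height concentrates at $m_{(\text{its size})} + O(1) = m_n + O(1)$ since its size is $\Theta(n)$ with high probability when $d$ is constant, and is $n^{1-o(1)}$ for $d = \Theta(\log(1/\e))$), but the bookkeeping between the depth spent gaining branching and the height lost, all measured against $k$, is where the constants $c_1$ and $\gamma'$ get pinned down, and where care is needed to keep the two halves of the argument consistent.
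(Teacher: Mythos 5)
Your overall plan matches the paper's (first force $k \gtrsim \log(1/\e)$, then exhibit $\e^{-\gamma'}$ vertices near the root each carrying a subtree of height $\ge m_n-k$), but both halves have genuine gaps. In the first half, your key claim --- that ``by the exponential concentration of the height of $T_n$ around $m_n$, $\P\{\hgt(T_n)\le m_n-t\}$ is at least a positive constant for $t$ a suitable multiple of $\log(1/\e)$'' --- is backwards: the concentration bound \eqref{eq:height-tails} gives the \emph{upper} bound $\P\{\hgt(T_n)\le m_n-t\}\le \alpha e^{-\alpha' t}$, which tends to $0$ for $t=\Theta(\log(1/\e))$. A lower bound on the failure probability needs a separate mechanism, and you supply none. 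The paper gets it from \eqref{eq:lb} together with the fact that $|T_n^{2,\uparrow}|$ is uniform on $\{1,\dots,n-1\}$: with probability at least $1/(2e^k)$ this subtree has size at most $n e^{-k}$, and conditionally on its size it is a random recursive tree whose typical height is then $m_n - ek + O(1) \ll m_n-k$, so its height falls below $m_n-k$ with probability at least $1/2$ by the \emph{upper} tail bound. This yields $\P\{1\notin R_k\}\ge e^{-k}/4$ and hence $k\ge \log(1/(4\e))$. Without some such anti-concentration device your first half does not go through.

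In the second half you correctly identify the structure needed, but you explicitly flag --- and do not resolve --- the decisive quantitative step: how to get $\e^{-\gamma'}$ incomparable vertices whose pendant subtrees all reach height $m_n-k$ when the affordable height deficit is only $k\approx\log(1/\e)$, given that each level of descent (equivalently, each halving of subtree size) costs a factor $e$ in height. The paper's resolution is to take the first $K=\lceil(1+\de)^k/4\rceil$ vertices with $\de$ chosen so that $e\log(1+\de)<1$: the subtree sizes $(|T_n^{K,1}|,\dots,|T_n^{K,K}|)$ form a P\'olya urn, so at least $K/4$ of them exceed $n(1+\de)^{-k}$ with probability $1-\e/2$, and each such subtree has height at least $m_{n'}+ek\log(1+\de)+2-k\le m_n-k$... rather, has height at least $m_n-k$ with probability at least $1/2$ because $ek\log(1+\de)-k<0$; binomial concentration then gives $K/8\ge c''\e^{-\log(1+\de)}$ surviving vertices. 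Note also that your union bound (``all $r$ subtrees are tall'') is more demanding than necessary and would force the exponents $r=\e^{-O(1)}$ and $e^{-c_2a}=\e^{O(1)}$ to line up exactly; requiring only a constant fraction of the subtrees to be tall, as the paper does, removes that tension. As written, the proposal is a plausible outline whose two load-bearing steps are, respectively, incorrect as stated and left unproved.
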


\noindent {\bf Remarks.}
\begin{itemize}
\item In Theorem~\ref{thm:main} the size $|R_k(T_n)|$ is not deterministically bounded, but we may easily modify the leaf-stripping algorithm to obtain an algorithm for which both the size and error probability are bounded. Specifically, consider the algorithm $R_k'$ defined by setting $R_k'(T_n)=R_k(T_n)$ if $R_k(T_n) \le \eps^{-\gamma}$ and setting $R_k'(T_n)=\emptyset$ otherwise. Then $R_k'$ has size at most $\eps^{-\gamma}$ and error probability less than $\eps$.
\item 
Sreedharan, Magner, Grama, and Szpankowski \cite{sreedharan2019inferring}
address the problem of (partially) recovering the labeling
of a Barabási-Albert preferential attachment graph. The first step of the procedure analyzed in \cite{sreedharan2019inferring} 
is a ``peeling'' algorithm whose aim is to recover the root. When the graph is a tree, peeling is equivalent to the leaf-stripping method analyzed here. In \cite{sreedharan2019inferring} (Lemma 6.1 in the supplementary material) it is claimed that peeling deterministically finds the root vertex, which is not the case. In this note we offer an analysis of the peeling algorithm for uniform attachment trees. The analogous problem for preferential attachment trees -- and more generally for preferential attachment graphs -- remains a challenge for future research. 
\item Navlakha and Kingsford \cite{navlakha} proposed a general framework for recovering the past 
 of growing networks from a present-day snapshot. 
 As a general principle, they propose \emph{greedy likelihood} for root reconstruction. Greedy likelihood, at each step, deletes the vertex which is most likely to have been added most recently, breaking ties uniformly at random. In a uniform attachment tree, greedy likelihood deletes a uniformly random leaf at each step. 
 This method is closely related to but not equivalent to leaf stripping. We conjecture that 
 greedy likelihood has similar performance to that of leaf stripping, though we do not see an easy way to extend the analysis of this paper to an analysis of the greedy likelihood algorithm.
\end{itemize}

\subsection{Notation} Given a rooted tree $t$ and a node $v \in t$, we denote its distance from the root by $\hgt(v)$. We further denote the height of $t$ by $\hgt(t) \ceq \max_{v \in t} \hgt(v)$; the subtree of $t$ rooted at $v$ by $t^{v,\downarrow}$
(defined as the subtree containing all vertices $u\in t$ for which the path between $u$ and the root contains $v$); and the parent of $v$ by $\mathrm{par}(v)$. 
Some more general notation includes: for any set $A$, let $A^0 = \emptyset$. The set of positive integers is denoted by $\N$, and we write $[n] = \{1, \dots, n\}$ for any $n \in \N$. We omit ceilings and floors for legibility whenever possible.

\section{Preliminaries}
\subsection{Embedding in the Ulam--Harris tree}
We introduce another representation for increasing trees that relies on the {\em Ulam--Harris tree} $\U$ \cite{Neveu1986}. 
This is the infinite ordered rooted tree with node set $\bigcup_{j=0}^\infty \N^j$ (root $\es$ and nodes on level $j$ of the form $n_1 \cdots n_j$) and edges from $n_1\cdots n_j$ to $n_1\cdots n_j n$ for all $j \geq 0$ and $n \in \N$; see Figure~\ref{fig:ulam-alone}.

\begin{SCfigure}[.6][hbtp]
    \centering
    \input{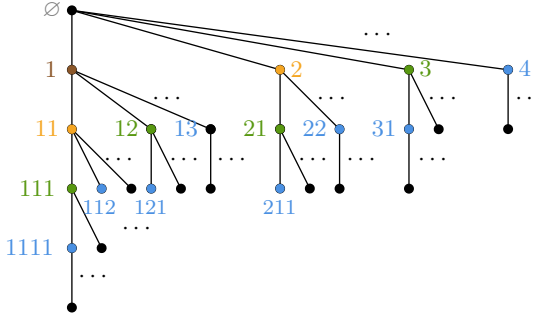}
    \caption{The Ulam--Harris tree $\U$, with zones 1 through 4 in different colours.}
    \label{fig:ulam-alone}
\end{SCfigure}

An ordered rooted tree can be canonically embedded in $\U$. Similarly, an increasing tree can be embedded in $\U$ by ordering vertices according to their labels: given an increasing tree $t$ on $[n]$, we define a map $\varphi = \varphi_t : [n] \to \U$ by 
\begin{subequations}\label{eq:phi-conditions}
\begin{align}
    &\varphi(1) = \es, \label{eq:phi-root} \\
    &\varphi(v) = n_1 \cdots n_jn_{j+1} \implies \varphi^{-1}(n_1\cdots n_{j}) = \mathrm{par}(v) < v, \label{eq:phi-parent}\\ 
    &\varphi(v) = n_1 \cdots n_j n,\ n > 1 \implies \varphi^{-1}(n_1 \cdots n_j (n-1)) < v.\label{eq:phi-sibling}
\end{align} 
\end{subequations}
Condition \eqref{eq:phi-root} ensures that the root of $t$ is mapped to the root of $\U$; \eqref{eq:phi-parent} ensures that parent-child relations in $t$ are preserved in $\U$, which in particular implies that $\hgt(\varphi(v)) = \hgt(v)$ for all $v \in t$; and \eqref{eq:phi-sibling} ensures that for any node in $\U$, its left sibling is a vertex in $t$ with a smaller label.
Then $\{\varphi(i)\}_{i \in [n]}$ is a size-$n$ subtree of $\U$, and clearly $t$ can be recovered from the map $\varphi$; we sometimes refer to $t$ as the increasing tree encoded by $\varphi$. See Figure~\ref{fig:phi-embedding} for an example increasing tree and its embedding into $\U$.

We define the \textit{zone} of a node $u = n_1\cdots n_j \in \U$, $u \neq \es$, to be $z(u) = n_1 + \dots + n_j$. A node being in zone $z+1$ corresponds to it being either the first child of a node of zone $z$ or the next sibling of a node in zone $z$. In particular, the number of nodes of $\U$ in zone $z$ is $|\{u \in \U: z(u) = z\}| = 2^{z-1}$. See Figure~\ref{fig:ulam-alone} for an illustration of zones 1 -- 4 of the Ulam--Harris tree.

We now embed the random recursive tree $T_n$ on $[n]$ into $\U$ via $\varphi_{T_n}$. This embedding helps us analyze the sizes of subtrees. Indeed, note that due to the P\'olya urn structure of the subtree sizes of $T_n$, we have that for vertices $v \in [n]$ in zone $z(\varphi_{T_n}(v)) = z$, 
\begin{equation}\label{eq:subtree-size}
    |T_n^{v,\downarrow}| \overset{d}{=} \lfloor \cdots \lfloor \lfloor n U_1 \rfloor U_2 \rfloor \cdots U_z \rfloor,
\end{equation}
where $U_1, \dots, U_z$ are i.i.d.\ Uniform$[0,1]$ random variables. Indeed, for zone 1, the unique vertex with $z(\varphi_{T_n}(v)) = 1$ is $v = 2$. Since the two sets of vertices that respectively connect to vertex 1 and vertex 2 have sizes distributed according to a P\'olya urn with two colours, we have $|T_n^{2,\downarrow}| \stackrel{d}{=} \lfloor n U \rfloor$ (see for instance \cite[p.~177]{JoKo1977}). The general result follows straightforwardly by induction using the fact that $1-U \stackrel{d}{=} U$. 

In particular, \eqref{eq:subtree-size} implies that the subtree sizes of two nodes in the same zone have the same distribution. 

\begin{figure}[hbtp]
    \centering
    \input{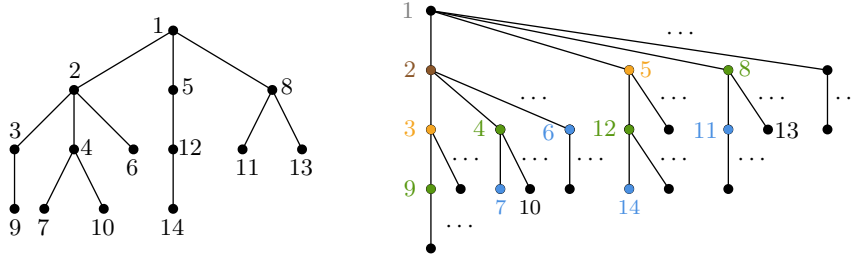}
    \caption{An instance of an increasing tree on $\{1, \dots, 14\}$ and its embedding via $\varphi$ into the Ulam--Harris tree.}
    \label{fig:phi-embedding}
\end{figure}

\subsection{Tail bounds on the height of random recursive trees}
In order to prove our main result, we use exponential tail bounds on $\hgt(T_n)$, see e.g.,~\cite{addario2013poisson} or \cite[Theorem 6.32]{drmota2009random}, which we state as follows: there exist $\alpha > 0$ and $0 < \alpha' < 1/(2e)$ such that for all $n \in \N$ sufficiently large and for all $k \in \N$, 
\begin{equation}\label{eq:height-tails}
    \P\{ | \hgt(T_n) - m_n | \geq k \} \leq \alpha e^{-\alpha' k}.
\end{equation}

\section{Proof of Theorem 1}

Recall from \eqref{eq:m_n-definition} that $m_n = \lceil e \log n - \frac{3}{2}\log\log n\rceil$. Recall that $R_k(T_n)$ is the vertex set of the tree obtained from $T_n$ after performing $m_n-k$ rounds of leaf stripping. For simplicity, we write $R_k \equiv R_k(T_n)$. 

\subsection{Detection of the root}

First, we need to make sure that leaf stripping will ``detect" vertex $1$, that is, we show that $\P\{ 1\in R_k \} \geq 1 - \e$.
\begin{lem} \label{lem:detection}
There exist $c,c' > 0$ such that such that for all $\e > 0$, for all $n \in \N$, if $k \geq c\log(c'/\e)$, then $\P\{1 \not\in R_k\} \leq \e$. 
\end{lem}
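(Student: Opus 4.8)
The plan is to combine the characterization of successful detection recalled above --- that $1\in R_k$ if and only if $1$ has two children in $T_n$ whose subtrees have height at least $m_n-k-1$ --- with the classical P\'olya urn decomposition of uniform recursive trees and the height tail estimate \eqref{eq:height-tails}.

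First I would dispose of degenerate cases: if $m_n\le k$ then no stripping is performed, $R_k=V(T_n)\ni 1$, and there is nothing to prove; and if $\e\ge 1$ the claim is trivial, so assume $\e<1$ and $m_n-k\ge 1$. Vertex $2$ is always a child of $1$; put $Q=T_n\setminus T_n^{2,\downarrow}$, a tree rooted at $1$ with $|Q|=n-|T_n^{2,\downarrow}|$ vertices, and note that every descendant in $T_n$ of a child $c\neq 2$ of $1$ lies in $Q$, so $Q^{c,\downarrow}=T_n^{c,\downarrow}$ for such $c$. If $1$ has at least two children then $|Q|\ge 2$, hence $\hgt(Q)=1+\max_c\hgt(Q^{c,\downarrow})$ over the children $c$ of $1$ in $Q$; therefore, on that event, $\hgt(T_n^{2,\downarrow})\ge m_n-k-1$ together with $\hgt(Q)\ge m_n-k$ forces $1$ to have two children with subtrees of height at least $m_n-k-1$, i.e.\ $1\in R_k$. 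Consequently
\[
  \P\{1\notin R_k\}\le \P\{1\text{ has at most one child}\}+\P\{\hgt(T_n^{2,\downarrow})< m_n-k-1\}+\P\{\hgt(Q)< m_n-k\},
\]
and the first term is exactly $1/(n-1)$.

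For the two height terms I would use that $|T_n^{2,\downarrow}|$ and $|Q|=n-|T_n^{2,\downarrow}|$ are both uniform on $\{1,\dots,n-1\}$, and that conditionally on its size each of $T_n^{2,\downarrow}$ and $Q$ is itself a uniform recursive tree --- the same P\'olya urn structure already invoked for \eqref{eq:subtree-size}. From the definition of $m_n$ and the monotonicity of $\log\log$ one has $m_s-m_n\ge e\log(s/n)-1$ for $1\le s\le n$, so conditioning on $|T_n^{2,\downarrow}|=s$ and applying \eqref{eq:height-tails} to the resulting size-$s$ uniform recursive tree gives, whenever $s\ge\max(n_0,ne^{-k/e})$ (with $n_0$ the threshold in \eqref{eq:height-tails}),
\[
  \P\{\hgt(T_n^{2,\downarrow})\le m_n-k-2\mid |T_n^{2,\downarrow}|=s\}\le \alpha e^{-\alpha'(e\log(s/n)+k+1)}=\alpha e^{-\alpha'(k+1)}(n/s)^{e\alpha'},
\]
while for $s$ below that threshold I would bound the conditional probability by $1$. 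Averaging over $s$ and using that $\alpha'<1/(2e)$ forces $e\alpha'<1$, so that $\E[(n/|T_n^{2,\downarrow}|)^{e\alpha'}]\le C$ for an absolute constant $C$, yields $\P\{\hgt(T_n^{2,\downarrow})< m_n-k-1\}\le n_0/n+2e^{-k/e}+C\alpha e^{-\alpha' k}$; the identical argument applied to $Q$ bounds $\P\{\hgt(Q)< m_n-k\}$ by a quantity of the same shape.

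Adding up, $\P\{1\notin R_k\}\le C'n_0/n+C''e^{-\beta k}$ with $\beta=\min(1/e,\alpha')>0$ and absolute constants $C',C''$. To finish I would take $c\ge 1/\beta$ and $c'$ large enough in terms of $\alpha,\alpha',n_0$ so that $k\ge c\log(c'/\e)$ makes $C''e^{-\beta k}\le\e/2$; moreover, in the surviving regime $m_n-k\ge 1$ one has $k< m_n< e\log n+1$, hence $n$ is at least a constant multiple of $(c'/\e)^{c/e}$, and choosing $c/e$ large makes $C'n_0/n\le\e/2$ as well. The only genuinely delicate point is the second of the three terms above: once $Q$ (or $T_n^{2,\downarrow}$) is conditioned to be atypically small, \eqref{eq:height-tails} is vacuous, so one must peel off the events $\{|Q|< n_0\}$ and $\{|Q|< ne^{-k/e}\}$; the first has probability only $n_0/n$ precisely because $|Q|$ is \emph{uniform} on $\{1,\dots,n-1\}$ --- which is why it is cleaner to remove the whole complement $Q$ of $T_n^{2,\downarrow}$ than to follow a single second child of $1$, whose subtree size would be a product of two uniforms --- and the lower bound on $n$ forced by $m_n-k\ge 1$ is what makes $n_0/n$ negligible compared with $\e$.
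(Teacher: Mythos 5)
Your proof is correct and follows essentially the same route as the paper's: decompose $T_n$ at vertex $2$ into two conditionally independent recursive trees whose (uniform) sizes you condition on, then apply the height tail bound \eqref{eq:height-tails} after discarding the atypically small sizes. The only differences are cosmetic — the paper truncates at $n e^{-k/3}$ and uses a uniform bound $\alpha e^{-\alpha' k(1-e/3)}$ above the truncation where you truncate at $ne^{-k/e}$ and integrate the weight $(n/s)^{e\alpha'}$ (both exploiting $\alpha' < 1/(2e)$), and you handle the small-$n$ and degenerate cases explicitly, which the paper glosses over.
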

\begin{proof}  
Recall that $T_n^{2,\downarrow}$ is the subtree of vertex $2$, and let $T^{2,\uparrow}_{n}$ be the tree $T_n$ with $T^{2,\downarrow}_{n}$ removed, i.e., the subtree of vertex $1$ excluding vertex $2$ and its descendants. It is straightforward to see that, conditionally given their sizes, both $T^{2,\uparrow}_n$ and $T^{2,\downarrow}_n$ are distributed as random recursive trees and are (conditionally) independent. Then, we have the following lower and upper bounds:
\begin{equation}\label{eq:lb}
    \P\{1 \not\in R_k\} \geq \P\{\hgt(T^{2,\uparrow}_{n}) < m_n - k\} 
\end{equation} 
and 
\begin{equation}\label{eq:Ek-upper-bd}
\begin{aligned}
    \P\{1 \not\in R_k\} &\leq \P\{ \hgt(T^{2,\uparrow}_{n}) < m_n - k \text{ or } \hgt(T^{2,\downarrow}_{n}) < m_n - k \} \\
    &\leq 2 \P\{\hgt(T^{2,\uparrow}_{n}) < m_n - k\},
\end{aligned}
\end{equation}
where the second inequality comes from a union bound, and 
the fact that $T^{2,\uparrow}_{n}$ and $T^{2,\downarrow}_{n}$ are identically distributed when seen as rooted unlabelled trees.
Therefore
\begin{equation}
\frac{\P\{1\not\in R_k\}}{\P\{\hgt(T^{2,\uparrow}_{n}) < m_n - k\}}\in [1,2]. 
\end{equation}
From this we see that controlling \eqref{eq:Ek-upper-bd}, that is, controlling $\P\{\hgt(T^{2,\uparrow}_{n}) < m_n - k\}$, is sufficient to prove the lemma. 
From \eqref{eq:subtree-size} we have that $|T^{2,\uparrow}_{n}| = n - |T^{2,\downarrow}_{n}| \stackrel{d}{=} \lfloor n U \rfloor  \stackrel{d}{=} \mathrm{Unif}(1,\dots, n-1)$ and  since $T^{2,\uparrow}_{n}$
is distributed as a random recursive tree conditioned on its size,
\begin{align*}
    \P\{\hgt(T^{2,\uparrow}_{n}) < m_n - k\} &= \sum_{j=1}^{n-1}\frac{1}{n-1} \P\{\hgt(T_j) < m_n - k \} \\ 
    &\leq 1/e^{k/3} + \frac{1}{n-1}\sum_{j=n/e^{k/3}}^n \P\{ \hgt(T_j) \leq m_n - k \}.
\end{align*}
For $j \geq n/e^{k/3}$ we have $|m_n - m_j - e \log(n/j)| \le 1 +o(1)$; so, using the upper tail bound from \eqref{eq:height-tails}, we obtain that
\begin{equation}
    \P\{ \hgt(T_j) \leq m_n - k \} \leq \P\{ \hgt(T_j) \leq m_j + 2 - k(1-e/3) \} 
    \leq \alpha e^{-\alpha' k}
\end{equation}  
for some $\alpha, \alpha' > 0$. 
Thus, 
\begin{equation*}
    \P\{\hgt(T^{2,\uparrow}_{n}) < m_n - k\} 
    \leq e^{-k/3} + \alpha e^{-\alpha' k}
\end{equation*}
and therefore
\[ 
\P\{1 \not\in R_k\} \leq c' e^{-\alpha'k}
\]
for some $0 < c' \leq 4\alpha$. The result follows by taking $k = c \log(c'/\e)$, where $c = 1/\alpha'$.
\end{proof}

\subsection{Size of the confidence set}

\begin{figure}[hbtp]
    \centering
    \input{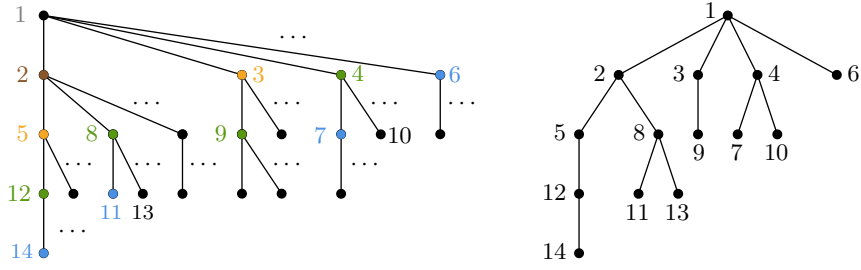}
    \caption{Image of the tree on $\{1,\dots,14\}$ from Figure~\ref{fig:phi-embedding} under the tree flipping involution $\ell^{-1} \circ f_2 \circ \ell$, i.e., flipping up to zone $z = 2$.}
    \label{fig:tree-after-bijection}
\end{figure}

We must now prove that $|R_k|$ is small, with high probability. 

\begin{lem}\label{lem:size}
    For any $c,c'$ as in Lemma~\ref{lem:detection}, there exists $\gamma > 0$ such that for all $\e > 0$, for all $n \in \N$, if $k = c \log(c'/\e)$, we have $\P\{ |R_k| > \e^{-\gamma} \} \leq \e$. 
\end{lem}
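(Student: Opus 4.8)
The plan is to replace $R_k$ by the deterministic over-estimate
$A := \{v \in [n] : \hgt(T_n^{v,\downarrow}) \geq m_n - k\}$ (with $T_n$ rooted at $1$), to show that $\E|A|$ is bounded by a function of $k$ alone, and to finish by Markov's inequality. For the inclusion $R_k \subseteq A$: as recalled in the Introduction, a vertex $v$ is removed before round $m_n - k$ of leaf stripping unless there are two edge-disjoint paths of length at least $m_n - k$ starting at $v$; these leave $v$ along distinct edges and at most one of those edges points to $\mathrm{par}(v)$, so at least one path descends into $T_n^{v,\downarrow}$, forcing $\hgt(T_n^{v,\downarrow}) \geq m_n - k$. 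Hence $|R_k| \leq |A|$, and it suffices to prove $\E|A| \leq C_2\,e^{k/e}$ for an absolute constant $C_2$: substituting $k = c\log(c'/\e)$ gives $\E|A| = O(\e^{-c/e})$, and $\P\{|R_k| > \e^{-\gamma}\} \leq \e^{\gamma}\E|A|$ is then at most $\e$ for any $\gamma$ exceeding $1 + c/e$.

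To bound $\E|A|$ I would embed $T_n$ in the Ulam--Harris tree via $\varphi = \varphi_{T_n}$ and sum over zones. A vertex $v$ in zone $z$ spans a subtree that, given its size, is a uniform attachment tree, and by \eqref{eq:subtree-size} its size is at most $n\prod_{i=1}^z U_i = n e^{-G_z}$ with $G_z := \sum_{i=1}^z (-\log U_i) \sim \mathrm{Gamma}(z,1)$; moreover each zone contains at most $2^{z-1}$ vertices, and $s \mapsto \P\{\hgt(T_s)\geq t\}$ is nondecreasing (couple $T_s \subseteq T_{s+1}$). Linearity of expectation then gives
\[
 \E|A| \;\leq\; \sum_{z\geq 1} 2^{z-1}\,\E\!\left[\psi\!\left(n e^{-G_z}\right)\right],
 \qquad \psi(s) := \P\{\hgt(T_{\lceil s\rceil}) \geq m_n - k\}.
\]
I would split $\psi(n e^{-g})$ at $g^* := k/e + O(1)$, the value past which $\psi$ is no longer of order one: bound $\psi \leq 1$ for $g \leq g^*$, and for $g > g^*$ use an exponential upper-tail bound for the height --- of the form $\P\{\hgt(T_s) \geq m_s + j\} \leq C e^{-j}$, available from \cite{addario2013poisson,drmota2009random}, together with $m_n - m_{\lceil n e^{-g}\rceil} = e g - \tfrac32\log\tfrac{\log n}{\log n - g} + O(1)$ --- to get $\psi(n e^{-g}) \leq C' e^{k}e^{-e g}(\cdots)$. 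The key device is the identity $\sum_{z\geq 1} 2^{z-1} g^{z-1}/(z-1)! = e^{2g}$, which converts each zone-sum into an integral: the first part becomes $\int_0^{g^*} e^{g}\,dg = e^{g^*}-1$, and the second becomes at most a constant times $e^{k}\int_{g^*}^{\infty} e^{-(e-1)g}(\cdots)\,dg$, which converges because $e-1 > 1$. Both are $O(e^{k/e})$, which is the desired bound.

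The main obstacle is uniformity in $n$, i.e.\ making sure the above does not blow up. If one uses only the two-sided estimate \eqref{eq:height-tails}, the $z$-series diverges: its rate $\alpha' < 1/(2e)$ is too slow to dominate $2^z$ (equivalently, \eqref{eq:height-tails} is far from tight precisely on the far upper tail, which is where a small subtree having height $m_n - k$ lives), so one genuinely needs the sharp rate-$1$ upper-tail bound. One must also be careful \emph{not} to drop the indicators $\mathbf{1}\{G_z \leq g^*\}$, $\mathbf{1}\{G_z > g^*\}$ before summing over $z$, since it is exactly the bounded ranges of integration they produce that render the series finite. A useful sanity check on the first part is that the subtrees rooted at distinct vertices of one zone are pairwise disjoint, so $\E\,\#\{v : |T_n^{v,\downarrow}|\geq s\} \leq n/s$, and $s = n e^{-k/e}$ already pins down $e^{k/e}$ as the order. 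Finally, for small $n$ the lemma is vacuous (deterministically $|R_k|\leq n \leq \e^{-\gamma}$ as soon as $\e \leq n^{-1/\gamma}$), so one may freely assume $n$ is large enough for the height estimates to apply.
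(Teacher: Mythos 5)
Your overall architecture is sound and genuinely different from the paper's: the inclusion $R_k \subseteq A$ is correct (of the two edge-disjoint length-$(m_n-k)$ paths that a surviving vertex must carry, at most one uses the parent edge), the zone decomposition together with the identity $\sum_{z\ge1}2^{z-1}g^{z-1}/(z-1)! = e^{2g}$ is the right way to sum the first-moment bound, and both pieces of your integral do come out to $O(e^{k/e})$ \emph{provided} the height estimate you invoke is available. That estimate is the gap. You need $\P\{\hgt(T_s)\ge m_s+j\}\le Ce^{-\beta j}$ with $\beta>1/e$, uniformly in $s$ and anchored at $m_s$, i.e.\ capturing the $-\tfrac32\log\log s$ correction. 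The bound the paper actually works with, \eqref{eq:height-tails}, has rate $\alpha'<1/(2e)<1/e$, which, as you correctly note, makes your $z$-series diverge. The elementary first-moment (expected profile) bound does give rate $1$ uniformly in $s$, but only anchored at $e\log s$: translating it to $m_s$ costs a factor $(\log s)^{3/2}$, and this loss occurs precisely in the crossover regime $j=O(\log\log s)$, i.e.\ $g$ near $g^*$, which is where your integral puts essentially all of its mass — so $\E|A|$ picks up a $\mathrm{polylog}(n)$ factor and uniformity in $n$ is lost. Interpolating between \eqref{eq:height-tails} and the first-moment bound does not repair this (for small $j$ you are thrown back on rate $\alpha'$). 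So your proof stands or falls with a second-order-sharp upper tail for the height; this is a substantially stronger input than anything stated in the paper, plausibly extractable from \cite{addario2013poisson}, but you have asserted it rather than verified it, and it is exactly the kind of assertion that needs checking here.

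The paper's proof is built to avoid this issue. It looks only at zone $4k$: if $|R_k|\ge 2^{4k}$ then some zone-$4k$ vertex has a subtree of height at least $m_n-k$, and a measure-preserving ``flip'' of the first $4k$ zones (an involution on increasing trees, via the child--sibling encoding) shows that, conditionally on such a vertex existing, with probability at least $1/2$ one of them sits at Ulam--Harris height at least $2k$, forcing $\hgt(T_n)\ge m_n+k$. This compares $\{S\ne\es\}$ to a single event about the full tree's height, so the weak rate $\alpha'$ of \eqref{eq:height-tails} suffices and merely worsens the exponent $\gamma$. If you can pin down the sharp upper tail in the literature, your route is a clean and more standard alternative (first moment plus Markov, with the arguably more natural exponent $\gamma\approx c/e$); otherwise the symmetry argument is doing real work that your approach cannot replace with the stated tail bounds.
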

\begin{proof}
We use the embedding of $T_n$ into $\U$ by $\varphi = \varphi_{T_n}$; recall from \eqref{eq:subtree-size} that under this embedding, if for $v \in T_n$, $z(\varphi_{T_n}(v)) = z$, then $|T_n^{v,\downarrow}| \stackrel{d}{=} \lfloor \cdots \lfloor nU_1 \rfloor \cdots U_z \rfloor$.

Let $S=\{v \in [n]: z(\varphi_{T_n}(v)) = 4k,\  \hgt(T_n^{v_, \downarrow})\geq m_n-k\}$ 
be the set of vertices of $T_n$ in zone $4k$ with subtrees of height $\geq m_n - k$. Note that if $S$ is empty, then $R_k$ only includes vertices in zones $1, \dots, 4k-1$, so 
\begin{equation}\label{eq:Rk-bound}
    |R_k| = \sum_{z=1}^{4k-1} |\{v \in [n]: z(\varphi_{T_n}(v)) = z\}| 
    \leq 1 + \sum_{z=1}^{4k-1} 2^{z-1} = 2^{4k-1} ,
\end{equation} 
and $k = c \log (c'/\eps)$ suffices for $|R_k| = \e^{-\gamma}$, where $\gamma \geq 4c \log 2 > 8e \log 2$. 

On the other hand, if $S$ is non-empty, then since the height of $T_n$ is at least $\hgt(v)+m_n-k$ for all $v\in S$, we have
\begin{equation}\label{eq:rephrase-pf-condition}
    \P\left\{ \hgt(T_n)\geq m_n+k \mid S\neq \es \right\}\geq \P\left\{ \exists v \in S: \ \hgt(\varphi_{T_n}(v)) \geq 2k \mid S\neq \es \right\}.     
\end{equation}
The heart of the proof then lies in the following claim:
\begin{equation}\label{eq:bijection}
    \P\left\{ \exists v \in S: \ \hgt(\varphi_{T_n}(v)) \geq 2k \mid S\neq \es \right\}\geq \frac{1}{2}.
\end{equation}
We complete the proof that $|R_k|$ is likely to be small before proving \eqref{eq:bijection}. Using \eqref{eq:bijection}, we have the following:
\begin{align*}
    \P\{ |R_k| \geq 2^{4k} \} 
    &\leq \P\{S \neq \es\} \\
    &\leq 2 \P \{ \hgt(T_n) \geq m_n + k \mid S\neq \es\} \cdot \P\{ S \neq \es\} \\ 
    &\leq 2 \P\{ \hgt(T_n) \geq m_n + k\} \\
    &\leq c' e^{- k/c},
\end{align*}
where the first inequality is due to \eqref{eq:Rk-bound}, the second is by multiplying by $1 \leq 2\P\{ \hgt(T_n) \geq m_n + k \mid S\neq \es\}$ using \eqref{eq:rephrase-pf-condition} and \eqref{eq:bijection}, and the fourth is due to the lower tail bound from \eqref{eq:height-tails}, with $c$ and $c'$ as in Lemma~\ref{lem:detection}. 
Therefore, assuming \eqref{eq:bijection} we have that $\P\{ |R_k| > \e^{-\gamma} \} \leq c'e^{-k/c} = \e$, which proves the lemma.
\end{proof}

In order to prove \eqref{eq:bijection}, we define an involution on increasing trees which, intuitively, flips vertices in zone $4k$ at large heights $\hgt(\varphi_{T_n}(v))$ to lower heights, while preserving their subtree structure. See Figure~\ref{fig:tree-after-bijection} for an illustration of the involution applied to the tree from Figure~\ref{fig:phi-embedding} (where for clarity we illustrate it flipping vertices in zone 2 rather than in a zone $4k$ for $k \in \N$); note how the heights of vertices 3 and 5 are swapped, while their subtrees are preserved. We formalize the involution below.

\begin{figure}[hbtp]
    \centering
    \input{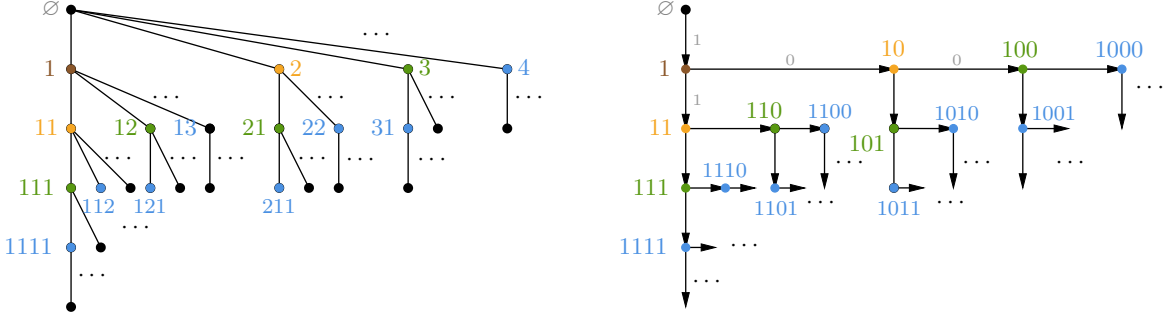}
    \caption{The Ulam--Harris tree (left) and its corresponding child-sibling binary tree (right) given by the bijection $\ell$ in \eqref{eq:child-sibling}, both with zones 1 through 4 illustrated in different colours.}
    \label{fig:ulam-child-sibling}
\end{figure}

We first define the child-sibling bijection $\ell : \U \mapsto \bigcup_{k = 0}^\infty \{0,1\}^k$ by $\ell(\es) \ceq \es$ and, for $n_1, \dots, n_j \in \N^j$,
\begin{equation}\label{eq:child-sibling}
    \ell( n_1\cdots n_j ) = 10^{n_1-1} 10^{n_2-1} \cdots 1 0^{n_j-1}.
\end{equation}
This corresponds to the classic bijection between an ordered rooted tree and its child-sibling binary tree representation: $\ell$ relabels each node according to its identifying path in the binary tree. For instance, $\ell(1) = 1$ and $\ell(12) = 110$; see Figure~\ref{fig:ulam-child-sibling}. Note that under the image of $\ell$, all non-root nodes in $\U$ have first coordinate in the child-sibling bijection equal to 1. Also, note that the zone $z(v)$ of any $v \in \U \setminus \{ \es \}$ is the length of $\ell(v)$. 

For each $j \geq 2$, we now define the tree flipping involution which ``exchanges the roles of child and sibling for all nodes in the first $j$ zones in the child-sibling encoding, and preserves the subtrees of nodes in zone $j$''. Formally, define $f_j: \bigcup_{k = 0}^\infty \{0,1\}^k \to \bigcup_{k = 0}^\infty \{0,1\}^k$ by $f_j(\es) = \es$, $f_j(1) = 1$, and 
\begin{equation}
    f_j( i_1 i_2\cdots i_{j-1} i_j i_{j+1} \cdots i_k ) = i_1 \overline{i_2 \cdots i_{j-1}i_j} i_{j+1} \cdots i_k,
\end{equation}
where $\overline{i} = 1-i$ for $i \in \{0,1\}$.
See Figure~\ref{fig:flipping-bijection} for an illustration for $j = 3$.

\begin{figure}[hbtp]
    \centering
    \input{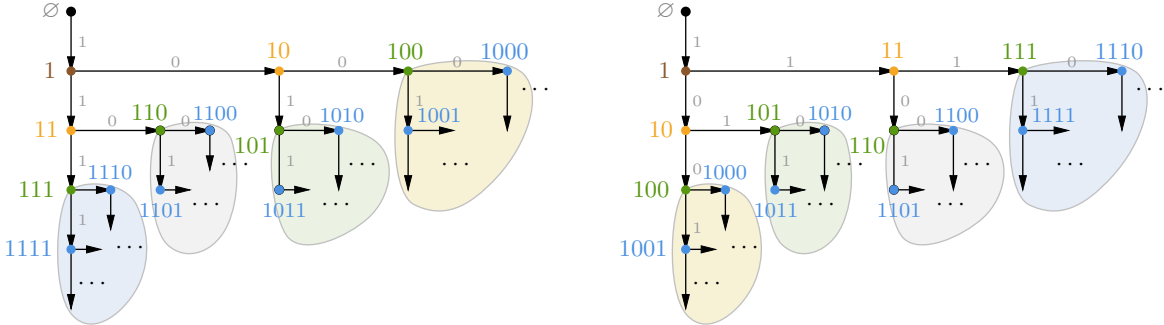}
    \caption{Illustration of $f_3$: the second and third coordinates are flipped for every node. Notice that for each node in zone 3, its subtree remains identical under the map.}
    \label{fig:flipping-bijection}
\end{figure}

Then, the final involution $b : \U \to \U$ that we consider is 
\begin{equation}
    b \ceq \ell^{-1} \circ f_{4k} \circ \ell,
\end{equation} 
and, given an increasing tree $t$ on $[n]$, we define a function $\overline{\varphi} = \overline{\varphi}_t: [n] \to \U$ by 
\begin{equation}
    \overline{\varphi}_t(v) = b(\varphi_t(v)).
\end{equation}
The following lemma ensures that we indeed obtain a involution between increasing trees. See Figure~\ref{fig:tree-after-bijection} for an illustration of the involution applied to the tree from Figure~\ref{fig:phi-embedding}, with $f_2$ rather than $f_{4k}$ for clarity.
\begin{lem}\label{lem:increasing}
    For $t$ an increasing tree on $[n]$, the map $\overline{\varphi}_t$ encodes an increasing tree on $[n]$.
\end{lem}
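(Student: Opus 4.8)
The plan is to verify directly that $\overline\varphi_t$ satisfies the three defining conditions \eqref{eq:phi-root}, \eqref{eq:phi-parent}, \eqref{eq:phi-sibling}: a map $[n]\to\U$ arises as the canonical embedding of an increasing tree on $[n]$ precisely when it satisfies these, so this proves the lemma. The argument becomes transparent once everything is transported through the child--sibling encoding $\ell$, where both the conditions and the map $f_{4k}$ simplify drastically.

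The first step is to rewrite \eqref{eq:phi-root}--\eqref{eq:phi-sibling} in binary. For $w=n_1\cdots n_j\in\U\setminus\{\es\}$ let $w^{\flat}$ be $\mathrm{par}(w)$ if $n_j=1$ (i.e.\ $w$ is a first child in $\U$) and the left sibling of $w$ in $\U$ otherwise. A direct computation from \eqref{eq:child-sibling} shows that in both cases $\ell(w^{\flat})$ is $\ell(w)$ with its last bit deleted; thus under $\ell$ the parent/left-sibling dichotomy of \eqref{eq:phi-parent}--\eqref{eq:phi-sibling} collapses to the single ``delete-last-bit'' operation $x\mapsto x^{-}$ on nonempty binary strings. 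I would then check that, for an injection $\psi\colon[n]\to\U$, conditions \eqref{eq:phi-root}--\eqref{eq:phi-sibling} are equivalent to: $\ell(\psi([n]))$ is a prefix-closed set of binary strings, and $\psi^{-1}\big(\ell^{-1}((\ell(\psi(v)))^{-})\big)<v$ for every $v$ with $\psi(v)\ne\es$. This equivalence needs a little care in one direction: \eqref{eq:phi-root} is forced, because if $\psi(1)\ne\es$ the monotonicity condition would produce a label strictly below $1$; and \eqref{eq:phi-parent} for a non-first child $w$ follows by iterating \eqref{eq:phi-sibling} from $w$ down to its leftmost sibling (a first child) and then invoking \eqref{eq:phi-parent} there.

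The second step isolates the only property of $f_{4k}$ used: it commutes with truncation. Reading $f_{j}$ as flipping coordinates $2,\dots,\min(\mathrm{length},j)$ (which is what its defining clauses amount to for strings of every length, and what makes $b$ defined on all of $\U$), the flipped positions among the first $m'$ coordinates of a string depend only on $m'$, so $f_{4k}$ sends the length-$m'$ prefix of $x$ to the length-$m'$ prefix of $f_{4k}(x)$ for every $m'\le|x|$. Hence $f_{4k}$ is a length-preserving involution fixing $\es$, it maps prefix-closed sets to prefix-closed sets, and $(f_{4k}(x))^{-}=f_{4k}(x^{-})$; since it fixes the first coordinate, $b=\ell^{-1}\circ f_{4k}\circ\ell$ is also a well-defined involution of $\U$. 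Now write $\hat\varphi=\ell\circ\varphi_t$, so $\ell\circ\overline\varphi_t=f_{4k}\circ\hat\varphi$. Applying the first step to $\varphi_t$ (valid since $t$ is increasing), $\hat\varphi([n])$ is prefix-closed of size $n$ and $\hat\varphi^{-1}(y^{-})<\hat\varphi^{-1}(y)$ for all $y\in\hat\varphi([n])\setminus\{\es\}$. Then $\ell(\overline\varphi_t([n]))=f_{4k}(\hat\varphi([n]))$ is prefix-closed of size $n$, and for $v$ with $\overline\varphi_t(v)\ne\es$, writing $x=\ell(\overline\varphi_t(v))$ and $y=f_{4k}(x)\in\hat\varphi([n])\setminus\{\es\}$ and using $f_{4k}^{-1}=f_{4k}$ together with $f_{4k}(x^{-})=y^{-}$, one gets $\overline\varphi_t^{-1}(\ell^{-1}(x^{-}))=\hat\varphi^{-1}(y^{-})<\hat\varphi^{-1}(y)=v$. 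By the equivalence of the first step, $\overline\varphi_t$ satisfies \eqref{eq:phi-root}--\eqref{eq:phi-sibling}, hence encodes an increasing tree on $[n]$.

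The main obstacle is the reformulation in the first step: recognizing that $\ell$ turns \eqref{eq:phi-parent}--\eqref{eq:phi-sibling} into a single prefix-closure-plus-monotonicity statement, and verifying this equivalence carefully in both directions (in particular that \eqref{eq:phi-root} and the parent condition for non-first children are automatic). Once that is in place, the conclusion is essentially bookkeeping on top of the single clean fact that $f_{4k}$ commutes with truncation, which makes the required closure and monotonicity properties immediate. A minor point worth flagging is that the definition of $f_j$ should be read as flipping coordinates $2$ through $\min(\mathrm{length},j)$, so that it applies to binary strings of every length, as it must for $b$ to be defined on all of $\U$.
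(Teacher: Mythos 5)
Your proof is correct and follows essentially the same route as the paper's: both verify \eqref{eq:phi-root}--\eqref{eq:phi-sibling} by passing through the child--sibling encoding $\ell$ and using that $f_{4k}$ acts coordinatewise while fixing the first bit, so that the parent/left-sibling relation (which under $\ell$ becomes deletion of the last bit) is respected. Your packaging --- prefix-closure plus truncation-monotonicity, together with the observation that $f_{4k}$ commutes with truncation --- merely unifies the paper's separate treatment of the child and sibling conditions and its two cases (zone $\leq 4k$ versus zone $>4k$), and your reading of $f_j$ on strings shorter than $j$ (flip coordinates $2$ through the end) is indeed the intended one.
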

\begin{proof}
    We check the conditions \eqref{eq:phi-root}, \eqref{eq:phi-parent} and \eqref{eq:phi-sibling} for $\overline \varphi = \overline{\varphi}_t$. 
    Since $b(\es) = \es$, \eqref{eq:phi-root} is satisfied, and since $b(u) \neq \es$ for $u \neq \es$, $1 < b(\varphi(v))$ for all $1 \neq v \in [n]$. Then, for \eqref{eq:phi-parent} and \eqref{eq:phi-sibling}, it suffices to show that, for any $w \in [n-1]$ with $\overline{\varphi}(w) = n_1 \cdots n_j$ for some $j \geq 1$, for any $v,v'\in [n]$, 
    \begin{enumerate}
        \item if $\overline{\varphi}(v) = n_1 \cdots n_j 1$ then $w < v$;
        \item if $\overline{\varphi}(v') = n_1 \cdots n_{j-1}(n_j+1)$ then $w < v'$.
    \end{enumerate}
    Note that by definition, $\ell(\overline{\varphi}(u)) = f_{4k} (\ell(\varphi(u)))$ for any $u \in [n]$.  
    First, suppose $n_1 + \dots + n_j + 1 \leq 4k$. If $v,v'\in[n]$ satisfy \eqref{eq:phi-parent} and \eqref{eq:phi-sibling}, then since 
    \[
    \ell(\overline{\varphi}(w)) = 1 0^{n_1-1}\cdots 10^{n_j-1} ~,~ \ell(\overline{\varphi}(v)) = 1 0^{n_1-1}\cdots 10^{n_j-1} 1 ~\text{ and }~ \ell(\overline{\varphi}(v')) = 1 0^{n_1-1}\cdots 10^{n_j-1}0 ,
    \]
    we have 
    \[
    \begin{aligned}
    \ell({\varphi}(w)) &= 1 1^{n_1-1} 0 1^{n_2-1}\cdots 01^{n_j-1} ,~ \\ 
    \ell({\varphi}(v)) &= 1 1^{n_1-1}0 1^{n_2-1}\cdots 01^{n_j-1} 0 = \ell(\varphi(w)) 0 \text{, and }~ \\
    \ell({\varphi}(v')) &= 1 1^{n_1-1}0 1^{n_2-1}\cdots 01^{n_j-1}1 = \ell(\varphi(w)) 1.
    \end{aligned}
    \]
    That is, $\varphi(v)$ and $\varphi(v')$ are respectively the right sibling and the first child of $\varphi(w)$ in $\U$. Therefore, by \eqref{eq:phi-sibling} and \eqref{eq:phi-parent} for $\varphi$, we have $w < v$ and $w < v'$, as required. 

    Now, if $n_1 + \dots + n_j + 1 > 4k$, by expanding analogously, we have $\ell(\varphi(v)) = \ell(\varphi(w) 1$ and $\ell(\varphi(v')) = \ell(\varphi(w)) 0$. Then, $\varphi(v)$ and $\varphi(v')$ are respectively the first child and the right sibling of $\varphi(w)$ in $\U$, so $w < v,v'$.
\end{proof}

We can now apply this function to the random recursive tree $T_n$. We denote by $\overline{T}_n$ the increasing tree encoded by $\overline{\varphi} \ceq \overline{\varphi}_{T_n}$. 
Let $\overline{S} = \{ v \in [n]: z(\overline{\varphi}(v)) = 4k,\ \hgt(\overline{T_n}^{v,\downarrow}) \geq m_n - k\}$ be the set of vertices in zone $4k$ that have tall subtrees after applying $\overline{\varphi}$. The involution $\overline{\varphi}$ has the following properties:

\begin{lem}\label{lem:properties}
    Let $T_n$ be a random recursive tree and write $\varphi = \varphi_{T_n}$. Then, the random tree $\overline{T_n}$ encoded by the map $\overline{\varphi} \ceq \overline{\varphi}_{T_n}$, and the set $\overline{S}$ satisfy the following properties: 
\begin{enumerate}
    \item For all $v \in [n]$, $z(\overline{\varphi}(v)) = z({\varphi}(v))$, i.e., the zone of all vertices stays the same,
    \item For all $v \in [n]$ with $z(\varphi(v)) = 4k$, $\overline{T_n}^{v,\downarrow} = T_n^{v,\downarrow}$, 
    and $S = \overline{S}$.
    \item For all $v \in [n]$ with $z(\varphi(v)) = z \in [4k]$, $\hgt(\varphi(v)) + \hgt(\overline{\varphi}(v)) = z + 1$. In particular, if $z(\varphi(v)) = 4k$ and $\hgt(\varphi(v)) \leq 2k+1$ then $\hgt(\overline{\varphi}(v)) \geq 2k$.
\end{enumerate}
\end{lem}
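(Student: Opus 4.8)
The plan is to reduce all three claims to elementary bookkeeping about how $f_{4k}$ acts on the binary words $\ell(\varphi(v))$, using three facts about the child-sibling encoding \eqref{eq:child-sibling}: (i) for $u = n_1\cdots n_j \in \U\setminus\{\es\}$ the word $\ell(u)$ has length $z(u)$, begins with $1$, and contains exactly $j = \hgt(u)$ ones; (ii) the descendants of $u$ in $\U$ are exactly those $w$ with $\ell(w) = \ell(u)\,s$ for $s = \es$ or $s$ beginning with $1$, and the suffix $s$ records the position of $w$ inside $\U^{u,\downarrow}$; and (iii) $f_{4k}$ preserves word length, fixes the first letter, and on any word of length at most $4k$ flips every letter except the first. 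Facts (i)--(ii) are read off from \eqref{eq:child-sibling} and the definition of $\U$, and (iii) from the definition of $f_{4k}$ (it was already used implicitly in the proof of Lemma~\ref{lem:increasing}).

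Part (1) is then immediate: $\ell(\overline{\varphi}(v)) = f_{4k}(\ell(\varphi(v)))$ has the same length as $\ell(\varphi(v))$, and that common length is the zone.

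For part (2), I would fix $v$ with $z(\varphi(v)) = 4k$, so $\ell(\varphi(v))$ has length exactly $4k$. Using the ancestor--descendant dictionary --- valid for $\varphi$ by \eqref{eq:phi-parent}, and for $\overline{\varphi}$ because $\overline{\varphi}$ encodes an increasing tree by Lemma~\ref{lem:increasing} --- a vertex $w$ is a descendant of $v$ in $T_n$ iff $\ell(\varphi(w)) = \ell(\varphi(v))\,s$ with $s = \es$ or $s$ starting with $1$. Since $f_{4k}$ touches only letters $2,\dots,4k$, it sends $\ell(\varphi(v))\,s$ to $\ell(\overline{\varphi}(v))\,s$; so $w$ is then a descendant of $v$ in $\overline{T_n}$ occupying the \emph{same} relative position $s$, and the converse follows the same way using $f_{4k}^{-1} = f_{4k}$. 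Hence $v$ has identical descendant sets in $T_n$ and $\overline{T_n}$ with identical relative positions, so $\overline{T_n}^{v,\downarrow} = T_n^{v,\downarrow}$ as labelled rooted trees. Together with part (1) this makes the two conditions defining $S$ and $\overline{S}$ coincide for every $v$, giving $S = \overline{S}$.

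For part (3), I would fix $v$ with $z(\varphi(v)) = z \in [4k]$; then $\ell(\varphi(v))$ has length $z \le 4k$, so by (iii) $f_{4k}$ flips all its letters save the first. By (i) the word has $\hgt(\varphi(v))$ ones, one being the leading letter, hence $\hgt(\varphi(v)) - 1$ ones and $z - \hgt(\varphi(v))$ zeros among the remaining $z-1$ letters; after flipping, $\ell(\overline{\varphi}(v))$ carries $1 + (z - \hgt(\varphi(v)))$ ones, so $\hgt(\overline{\varphi}(v)) = z - \hgt(\varphi(v)) + 1$ by (i) applied to $\overline{\varphi}(v)$. Rearranging gives $\hgt(\varphi(v)) + \hgt(\overline{\varphi}(v)) = z+1$, and the stated consequence ($z = 4k$, $\hgt(\varphi(v)) \le 2k+1 \Rightarrow \hgt(\overline{\varphi}(v)) \ge 2k$) is then immediate. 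I expect the only genuinely delicate point to be the identification in part (2) of the two subtrees as labelled trees rather than merely up to isomorphism: this hinges precisely on $f_{4k}$ leaving the suffix $s$ untouched, so each label $w$ sits at the very same place below $v$ before and after the flip. Everything else is counting ones in binary words.
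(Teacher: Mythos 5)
Your proposal is correct and follows essentially the same route as the paper: part (1) via length preservation of $f_{4k}$, part (2) via the observation that $f_{4k}$ leaves the suffix encoding the relative position below a zone-$4k$ vertex untouched, and part (3) by counting ones in the flipped word. The only difference is that you spell out the converse inclusion in part (2) via $f_{4k}^{-1}=f_{4k}$, which the paper leaves implicit.
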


\begin{proof}
For (i), recall that for $v = n_1\cdots n_j \in \U$, $z(v) = n_1 + \dots + n_j$. Also, $\ell(v)$ is a string of length $z(v)$ for all $v\in \U$. The statement then holds since $f_{4k}$ does not change the length of the string $\ell(\varphi(v))$. 

For (ii), we argue similarly to the proof of Lemma~\ref{lem:increasing}. Let $v \in [n]$ such that $z(\varphi(v)) = 4k$.
For any $w \in T_n^{v,\downarrow}$, we can write $\varphi(w)=\varphi(v)n_1\cdots n_k$ for some $k \geq 1$. Then, $\ell(\varphi(w)) = \ell(\varphi(v)) 10^{n_1-1} \cdots 1 0^{n_j-1}$. 
Since $\ell(\varphi(v))$ has length $4k$, writing $\ell(\overline \varphi(v)) = f_{4k}(\ell(\varphi(v)))$, we have \[f_{4k}(\ell(\varphi(w))) = f_{4k}(\ell(\varphi(v))) 10^{n_1-1} \cdots 1 0^{n_j-1} = \ell(\overline \varphi(v)) 10^{n_1-1} \cdots 1 0^{n_j-1}.\] 
Therefore $\overline{\varphi}(w) = \overline{\varphi}(v) n_1 \cdots n_k$, i.e., $w$ is mapped to a node in the same relative position in $\U$ with respect to $v$. As this holds for all $w \in T_n^{v,\downarrow}$, we have $\overline{T_n}^{v,\downarrow} = T_n^{v,\downarrow}$. Combined with (i), this also implies that $S = \overline{S}$.

For (iii), notice that $\hgt(\varphi(v))$ is the number of 1's in $\ell(\varphi(v))$, which has total length $z$. 
Then, applying $f_{4k}$ and noting that the first coordinate stays fixed, we see that $\ell(\overline{\varphi}(v))$ has $1 + z-\hgt(\varphi(v)$ 1's. 
\end{proof}

Armed with $\overline{\varphi}$ and the properties established in Lemma~\ref{lem:properties}, we complete the proof. 

\begin{proof}[Proof of \eqref{eq:bijection}]
Since $b$ is an involution on the set of increasing trees on $[n]$, and due to property (ii) from Lemma~\ref{lem:properties}, we have that 
\vspace{-.4\bs}
\begin{equation}\label{eq:same-dist}
    (\varphi , S ) \stackrel{d}{=} (\overline{\varphi}, \overline{S}) = (\overline{\varphi}, {S});
\end{equation}
in particular, $\overline{\varphi}$ also encodes a random recursive tree. Furthermore, due to (iii), if $S \neq \es$, then either $S$ contains some $v \in [n]$ with $\hgt(\varphi(v)) \geq 2k$, or $\overline{S}$ contains some $v \in [n]$ with $\hgt(\overline \varphi(v)) \geq 2k$. That is, letting $E_S = \{ \exists v \in S : \hgt(\varphi_{T_n}(v)) \geq 2k \}$ and $\overline{E_S} = \{ \exists v \in \overline{S} : \hgt(\overline{\varphi}(v)) \geq 2k \}$, we have 
$( E_S \cup \overline{E_S} ) \cap \{ S \neq \es\} = \{ S \neq \es\}$. Therefore, we have
\begin{equation}\label{eq:ES-and-bar-ES}
    \P \{ E_S \cup \overline{E_S} , S \neq \es \} = \P\{ S \neq \es \}.
\end{equation}
Also, \eqref{eq:same-dist} implies that 
\begin{equation*}
    \P\{ E_S, S \neq \es \} = \P \{ \overline{E_S}, \overline{S} \neq \es\} =  \P \{ \overline{E_S}, S \neq \es \},
\end{equation*}
so \eqref{eq:ES-and-bar-ES} can be rewritten as 
\begin{equation*}
    2 \P\{ E_S, S \neq \es\} \geq \P\{ S \neq \es\},
\end{equation*}
proving \eqref{eq:bijection}.
\end{proof}
\section{Proof of Theorem 2}
\begin{proof} 
First, let $f(k): \N \to \R^+$ be any non decreasing function and consider $n' = \lfloor n/f(k) \rfloor$. For any $k \in \N$, for $n$ sufficiently large, we have $n'\geq n/(2f(k))$, so $\P\{| T_n^{2,\uparrow}|\leq n'\} \geq 1/(2f(k))$. Also, for fixed $k$ we have $|m_n - m_{n'} - e \log f(k)|\leq 1 + o(1)$ as $n \to \infty$, so from \eqref{eq:height-tails}, for $n$ large enough,
\begin{equation}
\label{eq:height-n'-ub}
    \P\{\hgt(T_{n'}) \geq m_n - k \} \leq \P\{\hgt(T_{n'})\geq m_{n'}+e\log f(k) - 2 - k\} \leq \alpha e^{-\alpha'(e\log f(k) - k)}
\end{equation}
and 
\begin{equation}
\label{eq:height-n'-lb}
    \P\{\hgt(T_{n'}) \geq m_n - k \} \geq \P\{\hgt(T_{n'})\geq m_{n'}+e\log f(k) + 2 - k\}.
\end{equation}
Recalling from \eqref{eq:lb} that $\P\{1 \not\in R_k\} \geq \P\{\hgt(T^{2,\uparrow}_{n}) < m_n - k\}$, it follows that for all $k \in \N$, for all $n$ sufficiently large we have 
\begin{align}
    \P\{1 \not\in R_k\}
    &\geq \P \{ \hgt(T_{n}^{2,\uparrow}) < m_n - k \mid |T_n^{2,\uparrow}| \leq n' \} \P\{ |T_n^{2,\uparrow}| \leq n'\} \nonumber \\
    &\geq \P \{ \hgt(T_{n'}) < m_n - k\} ({2f(k)})^{-1} \nonumber \\ 
    &\geq ({1-\alpha e^{-\alpha'(e\log f(k) - k)}})({2f(k)})^{-1} .\label{eq:P1inRk-lb}
\end{align}
Setting $f(k) \equiv 2$, \eqref{eq:P1inRk-lb} yields that for all $k > 0$,  
\[ \liminf_{n \to \infty} \P\{1 \not \in R_k(T_n)\} > 0.\]
Let 
\[
\eps_0 = \min\Big(\liminf_{n \to \infty}\P\{1 \not\in R_k(T_n)\}:1 \le k \le \log(2\alpha)/(\alpha'(e-1))\Big)\, ;
\]
we hereafter assume that $\eps \in (0,\eps_0)$. Since the theorem only makes an assertion about values $k$ for which $\P\{1 \not\in R_k(T_n)\} < \eps$, we can thus also assume that $k \ge \log(2\alpha)/(\alpha'(e-1)))$ from now on. Taking $f(k)\equiv e^{k}$, this then implies that the lower bound in \eqref{eq:P1inRk-lb} is at least $e^{-k}/4$.
Therefore, we must take $k \geq \log(1/(4\e))$ to ensure $\P\{ 1 \not\in R_k\} < \e$. 

We now prove the polynomial lower bound on $|R_k(T_n)|$ for any $k \geq \log(1/(4\e))$. 

Fix $\de > 0$. Set $K = \lceil(1+\de)^k/4 \rceil$ and $n' = \lfloor n/(1+\de)^k \rfloor \sim n/(4K)$, i.e., $f(k) = (1+\de)^k$. Let $T_K$ be the subtree of $T_n$ consisting of vertices $[K]$ and consider the $K$ subtrees $T^{K,1}_n, \dots, T^{K,K}_n$, where $T_n^{K,i}$ is the tree containing vertex $i$ in the forest obtained by removing edges between vertices in $[K]$. The vector of subtree sizes $(|T^{K,1}_n|, \dots, |T^{K,K}_n|)$ is distributed as a standard P\'olya urn with $n$ balls of $K$ colours.
Therefore, 
    \begin{equation*}
        \P\bcurly{ \# \bcurly{ i \in [K] : |T^{K,i}_n| > n' } < \frac{K}{4}  } 
        \le \binom{K}{K/4} (1/4)^{3K/4} 
        % \le \exp[ \log(4e)){K/4} - 3\log(4) K / 4] 
        \le 2^{-K/4} 
        \le 2^{-(1+\de)^k/16} 
        % \le 2^{-(1+\de)^{\log(1/(4\e))}/16}
        \le \e/2,
    \end{equation*}
where the final inequality holds provided $\e > 0$ is sufficiently small, using that $k \geq \log(1/(4\e))$. 
Next, using that $\log f(k) = k \log(1+\de)$, we have from \eqref{eq:height-n'-lb} that
\[
\P\{ \hgt(T_{n'}) \ge m_n - k \} \ge \P \{ \hgt(T_{n'}) \ge m_{n'} + ek\log(1+\de) + 2 - k\} \geq 1/2
\]
for $\de > 0$ such that $\log(1+\de)<1/e$, and for $\e$ sufficiently small, since $\mathrm{med}(\hgt(T_{n'})) = m_{n'} + O(1)$. 

To combine these bounds, let $S = \{ i \in [K]: |T_n^{K,i}|>n'\}$. On the event $\{ |S| \geq K/4 \}$, since the events $(\{ \hgt(T_n^{K,i}) \ge m_n-k\})_{i \in [K]}$ are independent conditioned on the subtree sizes, we have that $\# \bcurly{ i \in [K] : \hgt(T_n^{K,i}) \ge m_n-k } \succeq_{\text st} \Bin(K/4,1/2)$. We have
\begin{align*}
\P &\left\{ \# \{ i \in [K]: \hgt(T_n^{k,i}) \ge m_n - k\} < \frac{K}{8} \right\} \\
&\le 
\P\left\{ \# \{ i \in [K]: \hgt(T_n^{k,i}) \ge m_n - k\} < \frac{K}{8} \cap \left(|S|\ge \frac{K}{4} \right)\right\} + \P\bcurly{|S| < \frac{K}{4}} \\ 
&\le \P\bcurly{ \Bin\left(\frac{K}{4},\frac{1}{2}\right)<\frac{K}{8} } + \e/2\\
&\le \exp(-K/32) + \e/2 \le \exp(-(1+\de)^k / 128) + \e/2 \le \e   
\end{align*}
for sufficiently small $\e > 0$, where we again used that $k \geq \log(1/(4\e))$. On the other hand, if $\# \{ i \in [K]: \hgt(T_n^{k,i}) \ge m_n - k\} \geq K/8$, there are at least $K/8$ vertices in $[K]$ with large subtree height, which are all then included in $R_k$, so $|R_k| \geq K/8 = c'' \e^{-\log(1+\de)}$ for some $c'' > 0$. This proves the theorem.
\end{proof}

\section*{Acknowledgements}
This research was initiated during the Seventeenth Annual Workshop on Probability and Combinatorics at McGill University's Bellairs Institute in Holetown, Barbados. We thank Bellairs Institute for its hospitality. 
LAB was supported by an NSERC Discovery Grant and by the CRC program.
AB was supported by NSF GRFP DGE-2141064 and Simons Investigator Award 622132 to Elchanan Mossel. 
GL acknowledges support from the Spanish MINECO grant PID2022-138268NB-100 and
  Ayudas Fundación BBVA a Proyectos de Investigaci\'on Cient\'ifica 2021.

\bibliographystyle{abbrv}
\bibliography{biblio}

\appendix

\end{document}